\documentclass[12 pt]{article}%
\usepackage{amsmath, amsfonts, amsthm, color,latexsym}
\usepackage{amsmath, ulem}
\usepackage{amsfonts}
\usepackage{amssymb}
\usepackage{color, soul}
\usepackage{xcolor}
\usepackage{tikz-cd}
\usepackage[all]{xy}
\usepackage{graphicx}%
\providecommand{\U}[1]{\protect\rule{.1in}{.1in}}
\allowdisplaybreaks[4]
\newtheorem{theorem}{Theorem}[section]

\newtheorem{remark}[theorem]{Remark}

\newtheorem{lemma}[theorem]{Lemma}
\newtheorem{final remark}[theorem]{Final Remark}

\allowdisplaybreaks[4]

\newcommand {\R}{\mathbb{R}}

\newcommand {\N} {\mathbb{N}}
\newcommand{\norma}[1]{\| #1 \|}
\newcommand{\conj}[2]{\left \{ {#1} \, : \, {#2} \right \}}

\begin{document}

\title{The weak maximizing property for $(L_p([0,1]),L_q([0,1]))$}
\author{Vinícius C. C. Miranda\thanks{Supported by FAPESP (Grants 2025/08630-0 and 2023/12916-1) and FAPEMIG (Grant APQ-01853-23)\newline 2020 Mathematics Subject Classification: 46B20, 46B25.\newline Keywords: weak maximizing property, norm-attaining operator, compact perturbation
property.}}
\date{}
\maketitle

\begin{abstract}
    We solve \cite[Question 4.2]{dantasjung} left open by Dantas, Jung and Martínez-Cervantes by providing a complete characterization for the pairs $(L_p([0,1]),L_q([0,1]))$ having the weak maximizing property. More precisely, we prove that, for $1<p<\infty$ and $1\leq q<\infty$, the pair $(L_p([0,1]),L_q([0,1]))$ has the weak maximizing property if and only if $p=q=2$. A key step in the proof is a Gaussian construction that yields the failure of the compact perturbation property for the pair $(\ell_2,L_r([0,1]))$ whenever $r>2$.
\end{abstract}

\section{Introduction}

In \cite{danieleduardo}, Pellegrino and Teixeira proved that, for each $1 < p < \infty$ and each $1 \leq q < \infty$, a bounded linear $T: \ell_p \to \ell_q$ attains its norm if and only if there exists a non-weakly null maximizing sequence $(x_n)_n \subset S_{\ell_p}$. Recall that a maximizing sequence for a bounded linear operator $T$ is a sequence $(x_n)_n \subset S_X$ such that $\displaystyle \lim_{n \to \infty} \norma{T(x_n)} = \norma{T}$.  This result motivated the introduction of the weak maximizing property, defined by Aron, García, Pellegrino and Teixeira in \cite{arongarciapelteix}: a pair of Banach spaces $(X, Y)$ satisfies the {\bf weak maximizing property} ($WMP$, in short) if every bounded linear operator $T: X \to Y$ admitting  a non-weakly null maximizing sequence attains its norm. Since then, this property has also been studied in \cite{dantasjung, garcia-lirola, jung}.

In \cite{dantasjung}, Dantas, Jung and Martínez-Cervantes asked whether the pair $(L_p([0,1]), L_q([0,1]))$ has the $WMP$ whenever $1 < p \leq 2 \leq q < \infty$ with $p \neq q$. The main purpose of this manuscript is to provide a negative answer to this question. 

\medskip

\noindent \textbf{Main Result} Let $1 < p < \infty$ and $1 \leq q < \infty$.
The pair of Banach spaces $(L_p([0,1]), L_q([0,1]))$ has the $WMP$ if and only if $p=q=2$.

\medskip

The main idea behind the proof is to construct a suitable compact perturbation of the standard Gaussian embedding of $\ell_2$ into $L_q([0,1])$, obtained from a sequence of independent standard Gaussian random variables (see, e.g., \cite[Proposition 6.4.12]{albiackalton}). More precisely, we replace the first Gaussian coordinate by a constant function and slightly modify the remaining coordinates in order to obtain a non-norm-attaining operator whose norm is strictly larger than that of its noncompact part. Accordingly, in Section 2 we collect the probabilistic preliminaries and prove the technical lemmas required for this construction. In Section 3, we carry out the construction and use it to prove the Main Result.

%%We also note that variants of the property were studied by Chakraborty \cite{cha} concerning the minimum norm, and by Luiz and the second named author in the setting of Banach lattices \cite{luizmiranda}.

\section{Background in Probability Theory}

In this section, we recall some basic facts from probability theory. Although some of the results below are standard, we include their statements and, when appropriate, their proofs in order to make the paper self-contained and accessible to readers primarily interested in functional analysis. We refer the reader to \cite{rolla,Kallenberg2002, Billingsley1995, talagrand} for further details.

\begin{remark} \label{remark1} \rm Let $(\Omega, \Sigma, \mathbb{P})$ be a probability space and let $X : \Omega \to \R$ be a random variable, i.e., a measurable function. \\
(i) The probability space induced by $X$ is the measure space $(\R, {\cal B}_{\R}, \mathbb{P}_{X})$, where ${\cal B}_{\R}$ is the Borel $\sigma$-algebra of $\R$ and 
$$ \mathbb{P}_{X}(B) := \mathbb{P}(\conj{\omega \in \Omega}{X(\omega) \in B}), \qquad B \in {\cal B}_{\R}. $$
\noindent (ii) The comulative distribution function of $X$ is the function $F_X: \R \to [0,1]$ defined by 
$$ F_X(x) = \mathbb{P}(\conj{\omega\in \Omega}{X(\omega) \leq x}). $$
If two random variables $X$ and $Y$ satisfy $F_X=F_Y$, then $\mathbb{P}_{X}=\mathbb{P}_{Y}$ (see \cite[Theorem 3.10]{rolla}). In this case, we write $X\overset{d}{=}Y$. For instance, we say that $X$ has the Gaussian distribution, and write $X \sim \mathcal{N}(\mu,\sigma^2)$, if 
$F_X (x) = \displaystyle  \dfrac{1}{\sqrt{2\pi \sigma^2}} \int_{-\infty}^x e^{\frac{-(t - \mu)^2}{2\sigma^2}} \, dt$.
Whenever $\mu=0$ and $\sigma^2 =1$, we say that $X$ has the standard Gaussian distribution.
\\
\noindent (iii) The expectation of $X\in L_1(\Omega)$ is defined by
$$ \mathbb{E}[X] = \int_{\Omega}X(\omega)\,d\mathbb{P}(\omega).$$
It is easy to check that if $X\overset{d}{=}Y$, then
$\mathbb{E}[\varphi(X)]=\mathbb{E}[\varphi(Y)]$
for every Borel measurable function $\varphi:\R\to\R$ such that $\varphi(X)$ and $\varphi(Y)$ are integrable. In particular, if $X\sim\mathcal{N}(0,1)$, then
$\mathbb{E}[X]=0$ and $\mathbb{E}[X^2]=1$
(see \cite[Example 5.41]{rolla}). \\
\noindent (iv) The variance of $X \in L_2(\Omega)$ is
$ \operatorname{Var} (X) = \mathbb{E}[X^2] - (\mathbb{E}[X])^2. $ The standard deviation of $X$ is 
$\sqrt{\operatorname{Var}(X)}. $
In particular, if $X\sim\mathcal{N}(\mu,\sigma^2)$, then $\operatorname{Var}(X)=\sigma^2$.\\
(v)  The random variables $X_1,\ldots,X_n$ are said to be independent if
$$ \mathbb{P}(X_1 \in B_1, \dots, X_n \in B_n) = \mathbb{P}(X_1 \in B_1) \cdots \mathbb{P}(X_n \in B_n)$$
for all $B_1,\ldots,B_n\in{\cal B}_{\R}$.
In particular, if
$X_1,\ldots,X_n$ are integrable, then
$\mathbb{E}[X_1\cdots X_n] =
\mathbb{E}[X_1]\cdots\mathbb{E}[X_n].$
Moreover, a sequence $(X_n)_n$ of random variables is said to be independent if every finite subfamily of $(X_n)_n$ is independent.\\
(vi) For every Borel measurable function $h:\R\to\R$ such that $h(G)$ is integrable, it follows that
$$ \mathbb{E}[h(G)] = \frac{1}{\sqrt{2\pi}}
\int_{\R}h(x)e^{-x^2/2}\,dx,$$
whenever $G\sim\mathcal{N}(0,1)$.\\
(vii) Let $f:[a,b]\times\R\to\R$ be a continuous function such that
$\frac{\partial f}{\partial t}$ exists and is continuous, and let
$X:\Omega\to\R$ be a random variable. Suppose that $f(t,X)$ is integrable for every $t\in[a,b]$ and that there exists $Y\in L_1(\Omega)$ such that
$$
\left|
\frac{\partial f}{\partial t}(t,X)
\right|
\leq Y
\quad\text{almost surely, for every $t\in[a,b]$}.
$$
Then the function
$t\longmapsto\mathbb{E}[f(t,X)]$
is differentiable on $(a,b)$ and
$$
\frac{d}{dt}\mathbb{E}[f(t,X)]
=
\mathbb{E}
\left[
\frac{\partial f}{\partial t}(t,X)
\right], \qquad \text{ for every } t\in(a,b).$$ At $t=a$ and $t=b$, the corresponding one-sided derivatives exist and satisfy the same formula
(see \cite[Theorem 5.49]{rolla}). \\
{\rm (viii)} Given $r> 2$ and $G \sim \mathcal{N}(0,1)$, we have
\begin{align*}
    \mathbb{E}[|G|^r] & = \frac{1}{\sqrt{2\pi}} \int_\R |x|^r e^{-x^2/2} \, dx = \frac{2}{\sqrt{2\pi}} \int_0^\infty x^r e^{-x^2/2} \, dx \\
    & = \frac{2}{\sqrt{2\pi}} \int_0^\infty x^{r-1} xe^{-x^2/2} \, dx \\
    & = \frac{- 2}{\sqrt{2\pi}} \int_0^\infty x^{r-1} \left ( e^{-x^2/2} \right )' \, dx \\ 
    & = \frac{- 2}{\sqrt{2\pi}} \left [ \lim_{N \to \infty} N^{r-1} e^{-N^2/2} - 0\right ] + \frac{2}{\sqrt{2\pi}} \int_0^\infty (r-1)x^{r-2} e^{-x^2/2} \, dx \\
    & = 0 + (r-1)\frac{2}{\sqrt{2\pi}} \int_0^\infty x^{r-2} e^{-x^2/2} \, dx  =  (r-1) \mathbb{E} [|G|^{r-2}].
\end{align*}
\end{remark}

Throughout the remainder of this paper, we consider the probability space $([0,1],{\cal B}_{[0,1]},m)$, where $m$ denotes the Lebesgue measure. We shall use a sequence $(g_n)_n$ of independent standard Gaussian random variables on $[0,1]$. For the existence of such a sequence, see, for instance, \cite[Theorem 3.19]{Kallenberg2002}. We also recall that, by \cite[Proposition 6.4.12]{albiackalton}, for every $1\leq p<\infty$, the closed linear span of $\{g_n:n\in\N\}$ in $L_p([0,1])$ is isometrically isomorphic to $\ell_2$ via the mapping
$$ (a_n)_n \in \ell_2 \mapsto \norma{g_1}_p^{-1}  \sum_{n=1}^\infty a_n g_n \in L_p([0,1]). $$
Notice also that 
$\norma{g_n}_p^p = \mathbb{E}[|g_n|^p] = \mathbb{E}[|G|^p]$ holds
for every $n \in \N$, where $G \sim \mathcal{N}(0,1)$. We next prove some technical lemmas that will be used in the proof of our main result.

\begin{lemma} \label{lema1}
    Let $(g_n)_{n}$ be a sequence of independent standard Gaussian random variables on $[0,1]$, $(b_n)_n \in \ell_2$, $r > 0$, and $c \in \R$. Then $c \boldsymbol{1} + \displaystyle \sum_{n=1}^\infty b_n g_n \overset{d}{=} c \boldsymbol{1} + \left (\displaystyle \sum_{n=1}^\infty b_n^2\right )^{1/2} G$, where $G \sim \mathcal{N}(0,1)$. Moreover, 
    $$ \mathbb{E} \left [ \left | c \boldsymbol{1} + \displaystyle \sum_{n=1}^\infty b_n g_n \right |^r \right ] = \mathbb{E} \left [ \left | c \boldsymbol{1} + \left (\displaystyle \sum_{n=1}^\infty b_n^2\right )^{1/2} G \right |^r \right ] = \mathbb{E} \left [ \left | |c| \boldsymbol{1} + \left (\displaystyle \sum_{n=1}^\infty b_n^2\right )^{1/2} G \right |^r \right ]. $$
\end{lemma}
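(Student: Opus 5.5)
The plan is to identify the distribution of $S:=\sum_{n=1}^\infty b_n g_n$ first, and then read off the two equalities of expectations from Remark \ref{remark1}(iii) and the symmetry of the Gaussian law. Put $\sigma:=\left(\sum_{n=1}^\infty b_n^2\right)^{1/2}$. The series defining $S$ converges in $L_2([0,1])$, since the $g_n$ are orthonormal there: independence together with $\mathbb{E}[g_n]=0$ and $\mathbb{E}[g_n^2]=1$ (Remark \ref{remark1}(iii),(v)) gives orthonormality. Hence the partial sums $S_N=\sum_{n=1}^N b_ng_n$ converge to $S$ in $L_2$, and so also in probability.

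To identify the law of $S$, I would use characteristic functions. For the finite sums, independence gives
$$\mathbb{E}[e^{itS_N}]=\prod_{n=1}^N \mathbb{E}[e^{itb_ng_n}]=\prod_{n=1}^N e^{-t^2b_n^2/2}=e^{-t^2\sigma_N^2/2},$$
where $\sigma_N^2=\sum_{n\le N}b_n^2$. Thus $S_N\sim\mathcal{N}(0,\sigma_N^2)$, which is the standard fact that a finite linear combination of independent Gaussians is Gaussian. Since $S_N\to S$ in $L_2$, we have $e^{itS_N}\to e^{itS}$ in probability, and these functions are bounded by $1$. Dominated convergence then gives $\mathbb{E}[e^{itS}]=e^{-t^2\sigma^2/2}$, so $S\overset{d}{=}\sigma G$ by uniqueness of characteristic functions. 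The degenerate case $\sigma=0$ is trivial. Adding the constant $c$ preserves equality in distribution, because $F_{c+S}(x)=F_S(x-c)=F_{\sigma G}(x-c)=F_{c+\sigma G}(x)$. This proves the first assertion.

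For the first equality of expectations, I would apply Remark \ref{remark1}(iii) with $\varphi(x)=|x|^r$. Integrability holds because, by Remark \ref{remark1}(vi), $\mathbb{E}[|c+\sigma G|^r]\le 2^r(|c|^r+\sigma^r\mathbb{E}|G|^r)<\infty$. For the second equality, the case $c\ge0$ is immediate. If $c<0$, I would use $|c+\sigma G|=|{-c}-\sigma G|=\big||c|+\sigma(-G)\big|$ together with $-G\overset{d}{=}G$. The symmetry $-G\overset{d}{=}G$ follows from the evenness of the density in Remark \ref{remark1}(ii) via the substitution $t\mapsto -t$; applying (iii) once more then concludes.

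The main obstacle is the limiting step, namely passing Gaussianity from the finite sums to the infinite series. The paper's background does not list characteristic functions, so I would either cite the Lévy uniqueness theorem from \cite{Kallenberg2002}, or avoid it by arguing with distribution functions directly. In the latter approach, $L_2$ convergence implies convergence in distribution, and $F_{\sigma_N G}\to F_{\sigma G}$ pointwise (for $\sigma>0$), which identifies $F_S=F_{\sigma G}$.
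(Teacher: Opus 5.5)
Your proof is correct and follows essentially the same route as the paper: partial sums converging in $L_2$ to identify the series as $\mathcal{N}(0,\sigma^2)$, shifting by $c$, applying Remark \ref{remark1}(iii) with $\varphi(t)=|t|^r$, and using $G\overset{d}{=}-G$ when $c<0$. The one difference is that you actually justify why the $L_2$-limit of the Gaussian partial sums is Gaussian (via characteristic functions, or convergence of distribution functions), whereas the paper only computes the variance of the limit and then asserts that it is $\mathcal{N}(0,\sigma^2)$.
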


\begin{proof} 
    Setting $X_N := \sum_{n=1}^N b_n g_n$ for each $N \in \N$, we have
$\displaystyle \operatorname{Var}(X_N) = \mathbb{E}[X_N^2] = \sum_{n=1}^N b_n^2 $
since $(g_n)_{n}$ is a sequence of independent standard Gaussian random variables (see, e.g., \cite[p. 238]{diestel}). As $X_N$ converges 
in $L_2([0,1])$ to $\displaystyle \sum_{n=1}^\infty b_n g_n$, we obtain 
\begin{align*}
      \mathbb{E}\left [ \left ( \sum_{n=1}^\infty b_n g_n \right )^2  \right ] & = \left \| \sum_{n=1}^\infty b_n g_n \right \|_2^2 = \lim_{N \to \infty} \left \| X_N \right \|_2^2 = \lim_{N \to \infty} \mathbb{E}[X_N^2] = \sum_{n =1}^\infty b_n^2.
\end{align*}
Let $\sigma := \left (\displaystyle \sum_{n =1}^\infty b_n^2\right )^{1/2} $. If $\sigma = 0$, then $b_n = 0$ for every $n \in \N$, and the result is trivial. If $\sigma > 0$, we have $\displaystyle \sum_{n =1}^\infty b_n g_n \sim \mathcal{N}(0, \sigma^2)$, and so 
$\displaystyle  c \boldsymbol{1} + \sum_{n =1}^\infty b_n g_n \sim \mathcal{N}(c, \sigma^2). $
Therefore, $$\displaystyle c \boldsymbol{1} + \sum_{n =1}^\infty b_n g_n  \overset{d}{=} c\boldsymbol{1} + \left (\sum_{n =1}^\infty b_n^2 \right )^{1/2} G, $$ where $G \sim \mathcal{N}(0,1)$.

Finally, since $\varphi(t) = |t|^r$ is a Borel measurable function, the equality in distribution implies that
$$ \mathbb{E} \left [ \left | c \boldsymbol{1} + \displaystyle \sum_{n=1}^\infty b_n g_n \right |^r \right ] = \mathbb{E} \left [ \left | c \boldsymbol{1} + \left (\displaystyle \sum_{n=1}^\infty b_n^2\right )^{1/2} G \right |^r \right ]. $$
We prove the last identity by considering two cases.  First, notice that if $c \geq 0$, then $ |c \boldsymbol{1} + \sigma G|^r = ||c|\boldsymbol{1} + \sigma G|^r$, which implies that
$ \mathbb{E}[|c \boldsymbol{1} + \sigma G|^r] = \mathbb{E} [||c|\boldsymbol{1} + \sigma G|^r]. $
If $ c < 0$, then 
$$ |c\boldsymbol{1} + \sigma G|^r = |- |c| \boldsymbol{1} + \sigma G|^r = ||c| - \sigma G|^r. $$
So, recalling that the standard Gaussian distribution is symmetric, that is, $G \overset{d}{=}-G$, we have $|c| \boldsymbol{1} + \sigma G \overset{d}{=} |c| \boldsymbol{1} - \sigma G$, and hence
$ \mathbb{E}[|c\boldsymbol{1} + \sigma G|^r] = \mathbb{E} [||c| - \sigma G|^r] = \mathbb{E}[||c| \boldsymbol{1} + \sigma G|^r].$
\end{proof}

\begin{lemma} \label{lema2prob}
    Let $r>2$, let $G\sim \mathcal{N}(0,1)$, and let $c\in\mathbb{R}$. Then the function
    $$ F(t) := \mathbb{E} \left [ |c+tG|^r \right ], \qquad t \in [0,\infty), $$
    is nondecreasing on $[0,\infty)$ and strictly increasing on $(0,\infty)$.
\end{lemma}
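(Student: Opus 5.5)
We differentiate under the expectation using Remark~\ref{remark1}(vii) and show the derivative is positive for $t>0$.

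\textbf{Reduction.} By Lemma~\ref{lema1} (or directly from the symmetry $G\overset{d}{=}-G$) we may replace $c$ by $|c|$, so assume $c\geq 0$.

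\textbf{Differentiation.} Set $f(t,x)=|c+tx|^r$ for $t$ in a compact interval $[a,b]\subset[0,\infty)$. Since $r>2>1$, $f$ is $C^1$ in $t$ with
\[
\frac{\partial f}{\partial t}(t,x)=r|c+tx|^{r-2}(c+tx)\,x .
\]
It is dominated by $r(c+b|x|)^{r-1}|x|$, which is integrable against the Gaussian density because all moments of $G$ are finite (Remark~\ref{remark1}(vi),(viii)). Remark~\ref{remark1}(vii) then gives, on $[0,\infty)$ with a one-sided derivative at $0$,
\[
F'(t)=r\,\mathbb{E}\big[|c+tG|^{r-2}(c+tG)G\big].
\]

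\textbf{Sign of $F'$ via Gaussian integration by parts.} Write $\psi(x)=|c+tx|^{r-2}(c+tx)$, which is $C^1$ since $r>2$, with $\psi'(x)=(r-1)t|c+tx|^{r-2}$. Stein's identity $\mathbb{E}[G\psi(G)]=\mathbb{E}[\psi'(G)]$ follows from $xe^{-x^2/2}=-(e^{-x^2/2})'$ and one integration by parts, exactly as in Remark~\ref{remark1}(viii); the boundary terms vanish by polynomial growth. Hence
\[
F'(t)=r(r-1)\,t\,\mathbb{E}\big[|c+tG|^{r-2}\big].
\]
For $t>0$ the random variable $c+tG$ is Gaussian with positive variance, so $|c+tG|^{r-2}>0$ almost surely and the expectation is strictly positive. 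Thus $F'(t)>0$ on $(0,\infty)$, and $F'(0)=0$.

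\textbf{Conclusion.} By the mean value theorem, $F$ is strictly increasing on $(0,\infty)$. Since $F$ is continuous at $0$ (it has a one-sided derivative there) and $F'\geq 0$ on $[0,\infty)$, it is nondecreasing on $[0,\infty)$; in fact strictly increasing there too.

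The main obstacle is making the Stein integration by parts rigorous: checking that the boundary terms vanish and that $\psi$ is $C^1$. The latter is exactly where $r>2$ is used (for $r=2$ it is still fine, but the paper assumes $r>2$). As a fallback, one can avoid Stein's identity by pairing $x$ with $-x$ in the integral: for $x>0$, $x\big(\psi(x)-\psi(-x)\big)>0$ because $s\mapsto|s|^{r-2}s$ is strictly increasing and $c+tx>c-tx$.
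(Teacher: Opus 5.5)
Your proposal is correct and follows essentially the same route as the paper. Both arguments differentiate under the expectation via Remark~\ref{remark1}(vii), use Gaussian integration by parts to get $F'(t)=r(r-1)t\,\mathbb{E}[|c+tG|^{r-2}]>0$ for $t>0$, and conclude from the sign of $F'$. The differences are minor: your reduction to $c\ge 0$ is unnecessary, and you get continuity at $0$ from the one-sided derivative rather than from the paper's separate dominated-convergence argument.
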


\begin{proof}
We first observe that $F$ is continuous on $[0,\infty)$. To see this, let $t_k \to t$ in $[0,\infty)$. Then $ |c+t_k G|^r \to |c+t G|^r $
almost surely. On the other hand, since $(t_k)_k$ is bounded, there exists $M > 0$
such that $|t_k| \leq M$ for every $k \in \N$, and hence
$$        |c+t_kG|^r       \leq        (|c|+M|G|)^r        \leq        2^{r-1}\left(|c|^r+M^r|G|^r\right).$$
Now, since  $\mathbb{E}[|G|^r]<\infty$ (see \cite[Proposition 6.4.11]{albiackalton}), the Dominated Convergence Theorem gives $F(t_k) \to F(t)$, proving that $F$ is continuous on $[0,\infty)$.

We now study the sign of $F'$. For each $t>0$, since $G\sim\mathcal{N}(0,1)$, it holds that
$ \displaystyle F(t) = \frac{1}{\sqrt{2\pi}} \int_{\mathbb{R}} |c+tx|^r e^{-x^2/2}\,dx.
$
By Remark \ref{remark1}(vii), we may differentiate under the expectation and obtain
$$ F'(t) = r\mathbb{E}\left[ |c+tG|^{r-2}(c+tG)G \right]. $$
Setting the auxiliary function $\psi_t(x):=|c+tx|^{r-2}(c+tx),$ we have
$$ F'(t) =  r\mathbb{E}\left[ |c+tG|^{r-2}(c+tG)G \right] = r\mathbb{E}[G\psi_t(G)].$$
Using again that $G \sim \mathcal{N}(0,1)$ and that $\dfrac{d}{dx} (e^{-x^2/2}) = -xe^{-x^2/2}$, we get
$$ F'(t) = r\mathbb{E}[G\psi_t(G)] = \frac{r}{\sqrt{2\pi}} \int_{\mathbb R} x\psi_t(x)e^{-x^2/2}\,dx = - \frac{r}{\sqrt{2\pi}} \int_{\mathbb R} \psi_t(x)\left(e^{-x^2/2}\right)'\,dx. $$
Next, integrating by parts, we obtain
\begin{align*}
    F'(t) & = - \frac{r}{\sqrt{2\pi}} \lim_{N \to \infty}  \left [ \psi_t(x) e^{-x^2/ 2} \big |_{-N}^N  \right ] + \frac{r}{\sqrt{2\pi}}
\int_{\mathbb R}
\psi_t'(x)e^{-x^2/2}\,dx \\
& = - \frac{r}{\sqrt{2\pi}} \lim_{N \to \infty}   \left [ \psi_t(N) e^{-N^2/ 2} - \psi_t(-N) e^{-N^2/ 2}  \right ]  + \frac{r}{\sqrt{2\pi}}
\int_{\mathbb R}
\psi_t'(x)e^{-x^2/2}\,dx \\
& = \frac{r}{\sqrt{2\pi}}
\int_{\mathbb R}
\psi_t'(x)e^{-x^2/2}\,dx = r \mathbb{E}[\psi_t'(G)].
\end{align*}
Finally, since $\psi_t'(x) = t(r-1)|c+tx|^{r-2},$
it follows that
\begin{align*}
    F'(t) & = r t (r-1) \mathbb{E}\left[
|c+tG|^{r-2} \right].
\end{align*}
Since $t>0$, $r > 2$, and $\mathbb{E}\left[
|c+tG|^{r-2} \right]>0,$
we conclude that
$ F'(t)>0$ for every $t>0$. Hence $F$ is strictly increasing on $(0,\infty)$. Since $F$ is continuous on $[0,\infty)$, it follows that $F$ is increasing on $[0,\infty)$.
\end{proof}

\begin{lemma} \label{lema3}
    Let $ r > 2$, let $G \sim \mathcal{N}(0,1)$,  and let $\alpha > 0$ such that $1 < \alpha^2 < r-1$. Define $\phi: [0,1] \to \R$ by 
    $$ \phi(x) = \mathbb{E}[|\alpha \sqrt{x} + \sqrt{1-x} \, G|^r]. $$ Then $\phi$ is continuous on $[0,1]$. Moreover, every point at which $\phi$ attains its maximum belongs to $(0,1)$.
\end{lemma}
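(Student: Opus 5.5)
Continuity of $\phi$ on $[0,1]$ is handled exactly as in the first part of the proof of Lemma \ref{lema2prob}. If $x_k \to x$ in $[0,1]$, then $|\alpha\sqrt{x_k}+\sqrt{1-x_k}\,G|^r \to |\alpha\sqrt{x}+\sqrt{1-x}\,G|^r$ almost surely. Moreover, these functions are dominated by $(\alpha+|G|)^r \leq 2^{r-1}(\alpha^r+|G|^r)$, which is integrable because $\mathbb{E}[|G|^r]<\infty$. The Dominated Convergence Theorem then gives $\phi(x_k)\to\phi(x)$. By compactness, $\phi$ attains its maximum on $[0,1]$. It therefore suffices to show that neither endpoint is a maximum point. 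To do so, we exhibit a nearby point in $(0,1)$ with strictly larger value, via one-sided derivatives at $0$ and at $1$.

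\textbf{Endpoint $x=1$.} Here $\phi(1)=\alpha^r$. We would reparametrize with $u=\sqrt{1-x}$, so that
$$\psi(u):=\phi(1-u^2)=\mathbb{E}\bigl[|\alpha\sqrt{1-u^2}+uG|^r\bigr].$$
Expanding near $u=0$, with $a(u)=\alpha\sqrt{1-u^2}=\alpha-\tfrac{\alpha}{2}u^2+O(u^4)$, we get
$$|a+uG|^r = a^r + r a^{r-1}uG + \tfrac{r(r-1)}{2}a^{r-2}u^2G^2 + \dots$$
Taking expectations kills the linear term, and
$$\psi(u)=\alpha^r - \tfrac{r}{2}\alpha^{r}u^2 + \tfrac{r(r-1)}{2}\alpha^{r-2}u^2 + o(u^2) = \alpha^r + \tfrac{r}{2}\alpha^{r-2}u^2\bigl((r-1)-\alpha^2\bigr)+o(u^2).$$
Since $\alpha^2<r-1$, we get $\psi(u)>\alpha^r$ for small $u>0$, so $1$ is not a maximum point.

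To make this rigorous, I would avoid an expansion with remainder. Instead, I would compute $\psi'(u)$ and $\psi''(0)$ via Remark \ref{remark1}(vii), using the Gaussian integration by parts of Lemma \ref{lema2prob}. That lemma gives $\mathbb{E}[G\,|a+uG|^{r-2}(a+uG)]=u(r-1)\mathbb{E}[|a+uG|^{r-2}]$. Hence
$$\psi'(u)=r\,\mathbb{E}\bigl[|a+uG|^{r-2}(a+uG)\,a'(u)\bigr]+r(r-1)u\,\mathbb{E}\bigl[|a+uG|^{r-2}\bigr].$$
Dividing by $u$ and letting $u\to0^+$ (with $a'(u)/u\to-\alpha$ and dominated convergence) gives
$$\psi'(u)/u\to r\alpha^{r-2}(r-1-\alpha^2)>0.$$
So $\psi$ is strictly increasing on some $(0,\delta)$, and thus $\psi(u)>\psi(0)$ there.

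\textbf{Endpoint $x=0$.} Here $\phi(0)=\mathbb{E}|G|^r$. I would use the parametrization $v=\sqrt{x}$:
$$\chi(v)=\mathbb{E}\bigl[|\alpha v+\sqrt{1-v^2}\,G|^r\bigr].$$
Expanding $|b G+\alpha v|^r$ with $b=\sqrt{1-v^2}$, odd powers of $G$ vanish in expectation, and
$$\chi(v)=b^r\mathbb{E}|G|^r+\tfrac{r(r-1)}{2}\alpha^2v^2\,\mathbb{E}|G|^{r-2}+o(v^2).$$
Using $b^r=1-\tfrac r2 v^2+o(v^2)$ and Remark \ref{remark1}(viii), namely $\mathbb{E}|G|^r=(r-1)\mathbb{E}|G|^{r-2}$, this becomes
$$\chi(v)=\mathbb{E}|G|^r+\tfrac{r(r-1)}{2}\,\mathbb{E}|G|^{r-2}\,v^2(\alpha^2-1)+o(v^2).$$
Since $\alpha^2>1$, $0$ is not a maximum point. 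Rigorously, I would again compute $\chi'(v)$ by differentiating under the expectation, applying the integration-by-parts identity from Lemma \ref{lema2prob} to the $G$-term, and checking that $\chi'(v)/v$ has positive limit $r(r-1)\mathbb{E}|G|^{r-2}(\alpha^2-1)$.

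\textbf{Main obstacle.} The hard part is justifying differentiation under the expectation and the limit of $\chi'(v)/v$, $\psi'(u)/u$. The integrand has only $r-2$ powers of regularity near where $a+uG=0$, and $r-2$ may be less than $1$. I expect uniform domination by constants times $(1+|G|)^{r}$ to suffice, since all derivatives appearing involve $|\cdot|^{r-1}$ or $|\cdot|^{r-2}$ with $r-2>0$. The one delicate point is the $\chi$ case, where the limit of $\mathbb{E}[|\alpha v+bG|^{r-2}(\alpha v+bG)G]$-type terms divided by $v$ must be handled. I would treat it with the same integration by parts in $G$, which converts it into $\mathbb{E}[|\cdot|^{r-2}]$ and is continuous in $v$ by dominated convergence.
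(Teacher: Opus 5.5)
Your proof is correct. It finds the same second-order coefficients at the two endpoints as the paper, but justifies them differently.

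\textbf{The paper's route.} It uses the $r$-homogeneity of $|\cdot|^r$. Along the curves $x=t^2/(1+t^2)$ and $x=1/(1+t^2)$ it rewrites $\phi$ as $(1+t^2)^{-r/2}\mathbb{E}[|G+\alpha t|^r]$ and $(1+t^2)^{-r/2}\mathbb{E}[|\alpha+tG|^r]$. Only one parameter then enters each Gaussian moment. It notes these moments are $C^2$ and multiplies second-order Taylor expansions with Peano remainders.

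\textbf{Your route.} You keep the parametrizations $u=\sqrt{1-x}$ and $v=\sqrt{x}$ and differentiate only once under the expectation. You simplify with the identity $\mathbb{E}[G\,|c+tG|^{r-2}(c+tG)]=t(r-1)\mathbb{E}[|c+tG|^{r-2}]$ from the proof of Lemma \ref{lema2prob}. You then conclude from the positive limits of $\psi'(u)/u$ and $\chi'(v)/v$ via monotonicity.

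\textbf{Comparison.} Your approach needs only first derivatives of $|y|^r$ and no remainder bookkeeping. The price is two-parameter integrands, which produce $0/0$ quotients. The paper's homogeneity reduction avoids these, and each differentiation under the expectation becomes routine.

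\textbf{A correction in the $x=0$ case.} Write $h(y)=|y|^{r-2}y$ and $b=\sqrt{1-v^2}$. Then $\chi'(v)=r\alpha\,\mathbb{E}[h(\alpha v+bG)]+r\,b'(v)\,\mathbb{E}[G\,h(\alpha v+bG)]$. The $G$-term is harmless, because $b'(v)=-v/b$ already supplies the factor $v$; the identity turns it into $-r(r-1)v\,\mathbb{E}[|\alpha v+bG|^{r-2}]$. The genuinely $0/0$ term is $r\alpha\,\mathbb{E}[h(\alpha v+bG)]/v$. It vanishes at $v=0$ by symmetry, and integration by parts in $G$ does not resolve it.

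You need to differentiate $v\mapsto\mathbb{E}[h(\alpha v+b(v)G)]$ at $v=0$. This is legitimate: $h'(y)=(r-1)|y|^{r-2}$ is continuous for $r>2$, and for $v\in[0,1/2]$ the derivative is dominated by a constant times $(1+|G|)^{r-1}$. The derivative at $0$ is $(r-1)\alpha\,\mathbb{E}[|G|^{r-2}]$. This term is what produces the $\alpha^2$ in your (correct) limit $r(r-1)\mathbb{E}[|G|^{r-2}](\alpha^2-1)$.
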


\begin{proof} To prove that $\phi$ is continuous, take $x_k\to x$ in $[0,1]$. Then
$$ \alpha\sqrt{x_k}+\sqrt{1-x_k}\,G\longrightarrow\alpha\sqrt{x}+\sqrt{1-x}\,G$$
almost surely. Moreover, since
$\left| \alpha\sqrt{x_k}+\sqrt{1-x_k}\,G \right|^r \leq   2^{r-1}(\alpha^r+|G|^r)$
holds for every $k \in \N$ and $\mathbb{E}[|G|^r] < \infty$, the Dominated Convergence Theorem gives
$$ \lim_{k \to \infty}\phi(x_k) =  \lim_{k \to \infty} \mathbb{E}[\left| \alpha\sqrt{x_k}+\sqrt{1-x_k}\,G \right|^r] = \mathbb{E}[\left| \alpha\sqrt{x}+\sqrt{1-x}\,G \right|^r] = \phi(x). $$
Thus, $\phi$ is continuous on $[0,1]$. 

Now, let $x_0$ be a point such that $\phi(x_0) = \displaystyle \max_{x \in [0,1]} \phi(x)$. To see that $x_0 \neq 0$, we define, for each $t > 0$, $x(t) = \dfrac{t^2}{1+t^2}$. Thus, $x(t) \in (0,1)$, $\displaystyle \lim_{t \to 0^+} x(t) = 0$, $\displaystyle \sqrt{x(t)}=\frac{t}{\sqrt{1+t^2}}$, and $\displaystyle \sqrt{1-x(t)}=\frac{1}{\sqrt{1+t^2}}.$
Hence
\begin{align*}
    \phi(x(t)) = \mathbb{E} \left [  \left | \frac{\alpha \, t}{\sqrt{1+t^2}} + \frac{1}{\sqrt{1+t^2}} G \right |^r\right ] = (1+t^2)^{-r/2}\mathbb{E} \left [ \left | \alpha t + G \right |^r\right ].
\end{align*}
By Remark \ref{remark1}(vii), the function
$H(s)=\mathbb{E}\left[|G+s|^r\right]$
is of class $C^2$. Therefore, Taylor's formula at $s=0$, applied with $s=\alpha t$, gives, as $t\to 0^+$,
$$H(\alpha t)= H(0)+\alpha tH'(0)+\frac{\alpha^2t^2}{2}H''(0)+o(t^2), $$
where $o(t^2)$ denotes a quantity satisfying
$\displaystyle  \lim_{t\to 0^+}\frac{o(t^2)}{t^2}=0. $ 
Since
$H'(0) = r\mathbb{E}\left[|G|^{r-2}G\right]$
and
$H''(0) =
r(r-1)\mathbb{E}\left[|G|^{r-2}\right],$
we obtain
$$ \mathbb{E}\left[|G+\alpha t|^r\right] = \mathbb{E}[|G|^r] +
r\alpha t\mathbb{E}\left[|G|^{r-2}G\right] +
\frac{r(r-1)}{2}\alpha^2t^2
\mathbb{E}\left[|G|^{r-2}\right] +
o(t^2). $$
%%By item (viii), applied on compact intervals, and using the fact that Gaussian random variables have finite moments of every order, we may differentiate under the expectation. Indeed, if $|s|\leq M$, then
%$$ \left| \frac{\partial}{\partial s}|x+s|^r \right| = r|x+s|^{r-1} \leq C_M(1+|x|^{r-1}), $$ and $$ \left| \frac{\partial}{\partial s} \left( r|x+s|^{r-2}(x+s) \right) \right| = r(r-1)|x+s|^{r-2} \leq C_M(1+|x|^{r-2}), $$ for some constant $C_M>0$. Therefore $H$ is twice differentiable and $$ H'(s) = r\mathbb{E}\left[ |G+s|^{r-2}(G+s) \right], $$ while $$ H''(s) = r(r-1)\mathbb{E}\left[ |G+s|^{r-2} \right]. $$
As 
$ \displaystyle \mathbb{E}\left[|G|^{r-2}G\right]= \frac{1}{\sqrt{2\pi}} \int_\R |x|^{r-2} x e^{-x^2/2} \, dx = 0, $
we have
$$ \mathbb{E}\left[|G+\alpha t|^r\right] = \mathbb{E}[|G|^r] + \frac{r(r-1)}{2}\alpha^2t^2 \mathbb{E}\left[|G|^{r-2}\right]
+ o(t^2).
$$
On the other hand, applying Taylor's formula to the function $t\mapsto (1+t)^{-r/2}$ at $t=0$, we get, as $t\to 0^+$,
$$ (1+t^2)^{-r/2} = 1-\frac r2t^2+o(t^2). $$
Combining the two Taylor expansions above, we get
\begin{align*}
\phi(x(t)) & = (1+t^2)^{-r/2}\, 
\mathbb{E}\left[|G+\alpha t|^r\right] \\
& =
\left(1-\frac r2t^2+o(t^2)\right)
\left( \mathbb{E}[|G|^r] +
\frac{r(r-1)}{2}\alpha^2t^2
\mathbb{E}\left[|G|^{r-2}\right] +
o(t^2) \right) \\
& = \mathbb{E}[|G|^r] -\frac{r}{2}t^2 \mathbb{E}[|G|^r] + \frac{r(r-1)}{2} \alpha^2t^2 \mathbb{E}[|G|^{r-2}] + o(t^2) \\
& = \mathbb{E}[|G|^r] -\frac{r}{2}t^2 \mathbb{E}[|G|^r] + \frac{r}{2} \alpha^2 t^2 \mathbb{E}[|G|^r] + o(t^2),
\end{align*}
where in the last equality we used Remark \ref{remark1}(viii). Therefore, 
$$ \phi(x(t)) =
\phi(0) + \frac r2(\alpha^2-1)\mathbb{E}[|G|^r]t^2 + o(t^2). $$
Since $\alpha^2>1$, it follows that
$\phi(x(t))>\phi(0)$
for every $t>0$ sufficiently small. Since $x(t)\in(0,1)$, this shows that $0$ cannot be a maximum point of $\phi$. Therefore $x_0\neq 0$.

To see that $x_0 \neq 1$, consider, for each $t> 0$, $z(t) = \dfrac{1}{1+t^2}$. Then $z(t) \in (0,1)$, $\displaystyle \lim_{t \to 0^+} z(t) = 1$, $\displaystyle \sqrt{z(t)} = \frac{1}{\sqrt{1 + t^2}}$, and $\displaystyle \sqrt{1-z(t)} = \frac{t}{\sqrt{1+t^2}}$. Hence
\begin{align*}
\phi(z(t)) =
\mathbb{E}\left[
\left|
\frac{\alpha}{\sqrt{1+t^2}}
+
\frac{t}{\sqrt{1+t^2}}G
\right|^r
\right] =
(1+t^2)^{-r/2}
\mathbb{E}\left[|\alpha+tG|^r\right].
\end{align*}

We again use Taylor expansions. First, by Remark \ref{remark1}(vii), the function $K(t) = \mathbb{E}\left[|\alpha+tG|^r\right]$
is of class $C^2$ in a neighborhood of $0$ and its Taylor expansion at $t = 0$ is
$$ \mathbb{E}\left[|\alpha+tG|^r\right]
= \alpha^r + \frac{r(r-1)}{2}\alpha^{r-2}t^2 +
o(t^2). $$
 On the other hand,
$$ (1+t^2)^{-r/2} = 1-\frac{r}{2}t^2+o(t^2). $$
Combining the two Taylor expansions above, we obtain
\begin{align*}
\phi(z(t))
&=
\left(
1-\frac{r}{2}t^2+o(t^2)
\right)
\left(
\alpha^r
+
\frac{r(r-1)}{2}\alpha^{r-2}t^2
+
o(t^2)
\right) \\
&=
\alpha^r
-
\frac{r}{2}\alpha^rt^2
+
\frac{r(r-1)}{2}\alpha^{r-2}t^2
+
o(t^2) \\
&=
\phi(1)
+
\frac{r}{2}\alpha^{r-2}
\left(
r-1-\alpha^2
\right)t^2
+
o(t^2).
\end{align*}
Since $\alpha^2<r-1$, it follows that
$\phi(z(t))>\phi(1)$
for all sufficiently small $t>0$. Since $z(t)\in(0,1)$, this shows that $1$ cannot be a maximizer of $\phi$. Therefore, $x_0\neq 1$.
\end{proof}

\section{Proof of the Main Result}

To prove our main result, we first prove that the pair $(\ell_2, L_r([0,1]))$ does not have the $WMP$ whenever $r> 2$. In fact, we prove that this pair fails the compact perturbation property. We recall from \cite{jung} that a pair of Banach spaces $(X,Y)$ is said to have the \textbf{compact perturbation property} ($CPP$, for short) if $T+K$ is norm-attaining whenever $T:X\to Y$ is a bounded linear operator and $K:X\to Y$ is a compact operator such that $\|T\|<\|T+K\|$.

\begin{lemma} \label{lema2}
    The pair of Banach spaces $(\ell_2, L_r([0,1]))$ fails to have the
    $CPP$ for every $r > 2$.
\end{lemma}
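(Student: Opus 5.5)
We construct an explicit pair $T$, $K$ from the Gaussian embedding, following the description in the Introduction. Fix $r>2$ and choose $\alpha>0$ with $1<\alpha^2<r-1$. Put $\gamma:=\norma{g_1}_r=(\mathbb{E}[|G|^r])^{1/r}$, so that $J(a)=\gamma^{-1}\sum_n a_n g_n$ is an isometric embedding of $\ell_2$ into $L_r([0,1])$. Choose a strictly increasing sequence $\lambda_n\uparrow 1$ with $\lambda_n\in(0,1)$, for instance $\lambda_n = 1-2^{-n}$.

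The operator $S:\ell_2\to L_r([0,1])$ will be
$$ S(a) = \frac{\alpha}{\gamma} a_1 \boldsymbol{1} + \frac{1}{\gamma}\sum_{n\geq 2} \lambda_n a_n g_n. $$
We write $S=T+K$, where $T(a)=\gamma^{-1}\sum_{n\ge2}a_ng_n$ and $K(a)=\alpha\gamma^{-1}a_1\boldsymbol 1+\gamma^{-1}\sum_{n\ge 2}(\lambda_n-1)a_ng_n$. The operator $K$ is compact because it is $J$ composed with a diagonal operator whose entries $\lambda_n-1$ tend to $0$, plus a rank-one piece. Moreover $\|T\|\le 1$, since $T=J\circ P$ with $P$ the projection onto the coordinates $n\ge 2$.

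Next we compute $\|S\|$. For $a\in S_{\ell_2}$, write $x=a_1^2$ and $\beta^2=\sum_{n\ge2}\lambda_n^2a_n^2\le 1-x$. Lemma \ref{lema1} gives
$$\gamma^r\norma{S(a)}_r^r=\mathbb{E}[|\alpha\sqrt{x}+\beta G|^r],$$
and by Lemma \ref{lema2prob} this is at most $\phi(x)$, in the notation of Lemma \ref{lema3}. Hence $\|S\|^r\le \gamma^{-r}\max\phi$.

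For the reverse inequality, take a maximizer $x_0\in(0,1)$ of $\phi$, which exists by Lemma \ref{lema3}. Set $a_1=\sqrt{x_0}$ and put the remaining mass $1-x_0$ on a single coordinate $n$. Letting $n\to\infty$, continuity of $F$ (Lemma \ref{lema2prob}) gives $\|S\|^r=\gamma^{-r}\phi(x_0)$. Since $x_0\ne 0$, we have $\phi(x_0)>\phi(0)=\gamma^r$, so $\|S\|>1\ge\|T\|$, which is exactly the hypothesis of the $CPP$.

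It remains to show that $S$ does not attain its norm, and this is the step needing care. Suppose $\norma{S(a)}_r=\|S\|$ with $\|a\|_2=1$. Then the chain of inequalities above forces $\phi(a_1^2)=\max\phi$, so $a_1^2=x_0\in(0,1)$ by Lemma \ref{lema3}. Consequently $1-x_0>0$, and some coordinate $a_n$ with $n\ge2$ is nonzero. Because every $\lambda_n<1$, this gives the strict inequality $\beta^2<1-x_0$.

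Here it matters that $x_0\neq 1$. Indeed, $1-x_0>0$ means $\beta<\sqrt{1-x_0}$ with $\sqrt{1-x_0}>0$, so the strict monotonicity of $F$ on $(0,\infty)$ from Lemma \ref{lema2prob} applies, also at $\beta=0$, where we use $F(0)<F(s)$ for $s>0$. It yields
$$\mathbb{E}[|\alpha\sqrt{x_0}+\beta G|^r]<\phi(x_0),$$
which contradicts $\norma{S(a)}_r=\|S\|$. Therefore $S=T+K$ is not norm-attaining even though $\|T\|<\|T+K\|$, and the pair $(\ell_2,L_r([0,1]))$ fails the $CPP$.
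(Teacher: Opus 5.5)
\textbf{Real case.} Your argument for real scalars is correct. It analyzes the same operator as the paper (up to the factor $\gamma^{-1}$), through Lemmas \ref{lema1}, \ref{lema2prob} and \ref{lema3} in the same way. The only difference is the split. You take $T=J\circ P$ and move the diagonal correction $\gamma^{-1}\sum_{n\ge2}(\lambda_n-1)a_ng_n$ into $K$, so $\|T\|\le 1$ is immediate and $K$ is rank one plus a compact diagonal map. The paper instead keeps the weights $\lambda_n$ in $T$, takes $K$ rank one, and verifies $\|T\|=\|g_2\|_r$ directly. Either split works; the Main Result only uses compactness of $K$. One small imprecision: from $\|Sa\|_r=\|S\|$ you can only conclude that $a_1^2$ is \emph{some} maximizer of $\phi$, not that it equals your chosen $x_0$. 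Lemma \ref{lema3} still places it in $(0,1)$, so the argument is unaffected.

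\textbf{Complex case.} This is the gap: the paper proves the lemma for complex scalars too, and your argument does not cover them. For $a\in\ell_2(\mathbb C)$, the real and imaginary parts of $\alpha a_1\boldsymbol{1}+\sum_{n\ge2}\lambda_na_ng_n$ are correlated Gaussians with, in general, different variances. So Lemma \ref{lema1}, which needs real $c$ and $b_n$, no longer reduces $\|Sa\|_r$ to $\mathbb{E}[|\alpha|a_1|+\beta G|^r]$. In fact $J$ is not even isometric on $\ell_2(\mathbb C)$: for $r=4$ and $a=(e_1+ie_2)/\sqrt2$ one gets $\|Ja\|_4^4=\mathbb{E}[(g_1^2+g_2^2)^2]/(4\cdot 3)=2/3$.

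You need two further facts:
\begin{itemize}
\item[(i)] $\|T_{\mathbb C}\|=\|T\|$ and $\|S_{\mathbb C}\|=\|S\|$. This holds because the domain exponent $2$ is at most $r$; the paper invokes Maligranda's theorem on norms of complexifications.
\item[(ii)] $S_{\mathbb C}$ does not attain its norm.
\end{itemize}
The paper gets (ii) from the identity $\int_0^1|a\cos 2\pi t+b\sin 2\pi t|^r\,dt=d_r^r(a^2+b^2)^{r/2}$, where $d_r=(\int_0^1|\cos 2\pi t|^r\,dt)^{1/r}$, together with Tonelli. These give $\|S_{\mathbb C}(x+iy)\|_r^r=d_r^{-r}\int_0^1\|Su_t\|_r^r\,dt$ with $u_t=x\cos 2\pi t+y\sin 2\pi t$. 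Compare this with $d_r^{-r}\int_0^1\|u_t\|_2^r\,dt\le\|x+iy\|_2^r$, which follows from Minkowski in $L_{r/2}$ since $r\ge 2$. If $S_{\mathbb C}$ attained its norm at $x+iy$, this would force $\|Su_t\|_r=\|S\|\,\|u_t\|_2$ for every $t$. The real $S$ would then attain its norm at a normalized $u_{t_0}\neq 0$, contradicting the real case. Without (i) and (ii), your proposal establishes the lemma only over $\mathbb{R}$.
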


\begin{proof} We prove first the real case.
    Fix $r > 2$ and choose $\alpha > 0$ with $1 < \alpha^2 < r - 1$. Let $(g_n)_{n\geq 2}$ be a sequence of independent standard Gaussian random variables on $[0,1]$ and let $(\lambda_n)_{n \geq 2}$ be a sequence of real numbers such that $0 < \lambda_n < 1$ holds for every $n \geq 2$ and $\displaystyle \lim_{n \to \infty} \lambda_n = 1$. By \cite[Theorem 6.4.12]{albiackalton}, the map $\displaystyle T((a_j)_j) = \sum_{n=2}^\infty \lambda _n a_n g_n$ defines a bounded linear operator from $\ell_2$ into $L_r([0,1])$. Define also the rank-one operator $K: \ell_2 \to L_r([0,1])$ by 
    $K((a_j)_j) = \alpha a_1 \boldsymbol{1},$
    where $\boldsymbol{1}$ denotes the constant function equal to $1$ on $[0,1]$.

We claim that $S:= T+K$ does not attain its norm and that $\norma{S} > \norma{T}$.
Take $a = (a_j)_j \in S_{\ell_2}$. By Lemma \ref{lema1}, applied to $c = \alpha a_1$ and $b_n = \lambda_n a_n$, $n \geq 2$, we have
$$ \norma{Sa}_r^r = \mathbb{E} \left [ \left |  \alpha |a_1| \boldsymbol{1}  + \left ( \sum_{n=2}^\infty \lambda_n^2 a_n^2 \right )^{1/2} G \right |^r \right ]$$
where $G\sim \mathcal{N}(0,1)$. By Lemma \ref{lema2prob} and the inequality
$\left(\sum_{n=2}^{\infty}\lambda_n^2a_n^2\right)^{1/2}
    \leq
    \left(\sum_{n=2}^{\infty}a_n^2\right)^{1/2}
    =
    \sqrt{1-a_1^2}$, we get that
\begin{align*}
    \norma{Sa}_r^r \leq \mathbb{E} \left [ \left |\alpha |a_1| +  \left(\sum_{n=2}^{\infty}a_n^2\right)^{1/2} G  \right |^r \right ] = \mathbb{E} \left [ \left | \alpha \sqrt{a_1^2} + \sqrt{1-a_1^2} \, G \right |^r \right ].
\end{align*}
Thus, defining $\phi:[0,1]\to\mathbb{R}$ by
 $\phi(x)    =    \mathbb{E}\left[    \left|    \alpha\sqrt{x}    +    \sqrt{1-x}\,G   \right|^r   \right],$
we have
$$ \norma{Sa}_r^r \leq \phi(a_1^2) \leq \sup_{x \in [0,1]} \phi(x), $$
and consequently $\norma{S}^r \leq \displaystyle \sup_{x \in [0,1]} \phi(x)$. By Lemma \ref{lema3}, $\phi$ is continuous on $[0,1]$, and so there exists $x_0\in[0,1]$ such that $\phi(x_0) = \displaystyle \max_{x\in [0,1]} \phi(x)$. Again by Lemma \ref{lema3}, we have $x_0 \in (0,1)$. Moreover, $\norma{S}^r \leq \phi(x_0)$.
To see the reverse inequality, consider the sequence $(z_m)_{m \geq 2} \subset S_{\ell_2}$ given by
$$ z_m = \sqrt{x_0} \, e_1 + \sqrt{1-x_0} \, e_m. $$
Then 
$$ \norma{Sz_m}_r^r = \norma{ \alpha \sqrt{x_0} \, \boldsymbol{1} + \lambda_m \sqrt{1-x_0} \, g_m }_r^r = \mathbb{E} \left [ | \alpha \sqrt{x_0} + \lambda_m \sqrt{1-x_0} \, G |^r \right ] $$
holds for every $m \geq 2$. Since $\lambda_m \to 1$, the Dominated Convergence Theorem yields
$$ \norma{S}^r \geq  \lim_{m \to \infty} \norma{Sz_m}_r^r = \mathbb{E} \left [ | \alpha \sqrt{x_0} + \sqrt{1-x_0} \, G |^r \right ] = \phi(x_0). $$
Therefore $\norma{S}^r = \phi(x_0)$. We now prove that $S$ is not norm-attaining. Let $a=(a_j)_j\in S_{\ell_2}$ and set $x=a_1^2$. If $x=1$, then $\norma{Sa}_r^r=\phi(1)<\phi(x_0)=\norma{S}^r$. Suppose now that $x<1$. Since $\displaystyle \sum_{n=2}^{\infty}a_n^2=1-x>0$, there exists $n\geq 2$ such that $a_n\neq 0$. Since $0<\lambda_n<1$ for every $n\geq 2$, it follows that
$\displaystyle \sum_{n=2}^{\infty}a_n^2(1-\lambda_n^2)>0,$ so
$\displaystyle \sum_{n=2}^{\infty}\lambda_n^2a_n^2<\sum_{n=2}^{\infty}a_n^2=1-x.$ Moreover, since $\lambda_n>0$ for every $n\geq 2$, we also have
$$0<\left(\sum_{n=2}^{\infty}\lambda_n^2a_n^2\right)^{1/2}<\sqrt{1-x}.$$ Thus, by the strict monotonicity of the map
$t\longmapsto\mathbb{E}\left[\left|\alpha\sqrt{x}+tG\right|^r\right]$
on $(0,\infty)$, given by Lemma \ref{lema2prob}, we obtain
$$ \norma{Sa}_r^r = \mathbb{E} \left [ \left | \alpha \sqrt{x} + \left ( \sum_{n=2}^\infty \lambda_n^2 a_n^2 \right )^{1/2} G  \right |^r \right ] < \phi(x) \leq \phi(x_0) = \norma{S}^r.$$
Therefore $S$ does not attain its norm.

Finally, let us prove that $\norma{S}>\norma{T}$. By \cite[Theorem 6.4.12]{albiackalton}, for every $a=(a_j)_j\in\ell_2$, we have
$$
\norma{Ta}_r
= \norma{\sum_{n=2}^{\infty}\lambda_na_ng_n}_r =  \norma{g_2}_r \left(\sum_{n=2}^{\infty}\lambda_n^2a_n^2\right)^{1/2} \leq \norma{g_2}_r\norma{a}_2. $$
Therefore, $\norma{T}\leq\norma{g_2}_r$.
On the other hand, for every $m\geq 2$, we have
$$\norma{T}\geq\norma{Te_m}_r=\lambda_m\norma{g_m}_r=\lambda_m\norma{g_2}_r.$$
Since $\lambda_m\to 1$, $\norma{T}\geq\norma{g_2}_r$, and hence $\norma{T}=\norma{g_2}_r$. Moreover, since $0$ is not a point at which $\phi$ attains its maximum, we have $$\norma{T}^r=\norma{g_2}_r^r=\mathbb{E}[|G|^r]=\phi(0)<\phi(x_0)=\norma{S}^r.$$
Consequently, $\norma{T}<\norma{S}$.

We now consider the complex case. Let $T_{\mathbb C}$, $K_{\mathbb C}$, and $S_{\mathbb C}$ denote the canonical complexifications of $T$, $K$, and $S$, respectively. Then $S_{\mathbb C}=T_{\mathbb C}+K_{\mathbb C}$ and $K_{\mathbb C}$ is compact. It follows from $\norma{T} < \norma{T + K}$ and from
\cite[Theorem 1]{maligranda} that
$$ \norma{T_{\mathbb{C}}} = \norma{T} < \norma{T+K} = \norma{T_{\mathbb{C}} + K_{\mathbb{C}}}. $$
It remains to prove that $S_{\mathbb C}$ does not attain its norm. Suppose, to the contrary, that $S_{\mathbb C}$ attains its norm at some $z=x+iy\in S_{\ell_2(\mathbb C)}$, with $x, y \in \ell_{2}(\mathbb{R})$. 
Fix $s \in [0,1]$. Applying \cite[Lemma 1]{maligranda} with $a = (Sx)(s)$ and $b = (Sy)(s)$, we have
$$  \int_0^1 |(Sx)(s) \cos{(2\pi t)} + (Sy)(s) \sin{2\pi t}|^r \, dt  = d_r^r\, (|(Sx)(s)|^2 + |(Sy)(s)|^2)^{r/2},$$
where $d_r = \displaystyle \left (\int_0^1 |\cos{(2\pi t)}|^r \, dt \right )^{1/r}$.
Thus, using that $\norma{S} = \norma{S_{\mathbb{C}}} = \norma{S_{\mathbb{C}}(z)}$, the above identity, and Tonelli's theorem, we have
\begin{align*}
 \norma{S}^r & = \norma{S_{\mathbb C}(x+iy)}_r^r 
%%&= \int_0^1 |Sx(s) + i Sy(s)|^r \, ds  \\
 =  \int_0^1  (|(Sx)(s)|^2 + |(Sy)(s)|^2)^{r/2} \, ds  \\
& = \frac{1}{d_r^r} \int_0^1 \int_0^1 |(Sx)(s) \cos{(2\pi t)} + (Sy)(s) \sin{2\pi t}|^r \, dt \, ds \\   
& = \frac{1}{d_r^r} \int_0^1 \int_0^1 |S(x \cos{(2\pi t)} + y\sin{(2\pi t)})(s)|^r \, ds \, dt \\
& = \frac{1}{d_r^r} \int_0^1 \norma{S(x \cos{(2\pi t)} + y \sin{(2\pi t)})}_r^r \, dt \\
& \leq \frac{\norma{S}^r}{d_r^r} \int_0^1 \norma{x \cos{(2\pi t)} + y \sin{(2\pi t)}}_2^r \, dt \\
& \leq \norma{S}^r \norma{x+iy}_2^r = \norma{S}^r,
\end{align*}
where in the last line we used the inequality provided in the proof of \cite[Theorem 1]{maligranda}.
In particular, it holds that 
$$ \int_0^1 \left (\norma{S}^r\norma{x \cos{(2\pi t)} + y \sin{(2\pi t)}}_2^r - \norma{S(x \cos{(2\pi t)} + y \sin{(2\pi t)})}_r^r\right )\, dt = 0,$$
and since the integrand is a nonnegative continuous function of $t$, it vanishes for every $t\in[0,1]$, that is
$$ \norma{S}^r\norma{x \cos{(2\pi t)} + y \sin{(2\pi t)}}_2^r = \norma{S(x \cos{(2\pi t)} + y \sin{(2\pi t)})}_r^r $$
for every $t\in[0,1]$. Since $x$ and $y$ are not both zero, there exists $t_0\in[0,1]$ such that $u_{t_0} := x \cos{(2\pi t_0)} + y \sin{(2\pi t_0)} \neq 0$. Therefore,
$$ \norma{ S\left( \frac{u_{t_0}}{\norma{u_{t_0}}_{2}} \right) }_r
=\norma{S},$$
which shows that $S$ attains its norm. This contradicts the real case. Therefore, $S_{\mathbb C}$ does not attain its norm.

Therefore, $(\ell_2, L_r([0,1]))$ does not have the $CPP$.
\end{proof}

It follows from \cite[Proposition 2.4]{arongarciapelteix} that the weak maximizing property implies the compact perturbation property. Therefore, Lemma \ref{lema2} implies that the pair $(\ell_2, L_r([0,1]))$ fails the $WMP$ for every $r>2$. We can now prove our main result.

\medskip

\noindent {\it Proof of the Main Result:} If $p = q = 2$, the pair $(L_2([0,1]), L_2([0,1]))$ has the $WMP$ since $L_2([0,1])$ is a Hilbert space (see, e.g., \cite[p. 5]{arongarciapelteix}). 

If $p > 2$ or $q < 2$, then the pair $(L_p([0,1]), L_q([0,1]))$ does not have the $WMP$ by \cite[Theorem 3.2]{dantasjung}. It remains to consider the case $1 < p \leq 2 \leq q < \infty$ with $p \neq q$. First, assume that $p =2$. By the above, the pair $(\ell_2, L_q([0,1]))$ does not have the $WMP$ for every $ q > 2$, and since $\ell_2$ is isometrically isomorphic to a $1$-complemented subspace of $L_2([0,1])$ (see \cite[Theorem 6.2.13 and Theorem 6.4.2]{albiackalton} or the proof of \cite[Theorem 3.2]{dantasjung}), it follows  by  \cite[Proposition 2.2(b)]{dantasjung}
that  $(L_2([0,1]),L_q([0,1]))$ does not have the $WMP$.  Finally, we assume that $1 < p < 2$. 
Since $\ell_2$ embeds isometrically into $L_q([0,1])$ for every $q \geq 2$ (see \cite[Theorem 6.4.18]{albiackalton}), it is enough by \cite[Proposition 2.2(a)]{dantasjung} to prove that $(L_p([0,1]), \ell_2)$ does not have the $WMP$. Indeed, if $p^*$ denotes the conjugate exponent of $p$, then $p^* > 2$, and by Lemma \ref{lema2}, there exists a bounded linear operator $T: \ell_2 \to L_{p^*}([0,1])$ and a rank-one operator $K: \ell_2 \to L_{p^*}([0,1])$ such that $T+K$ does not attain its norm and $\norma{T+K} > \norma{T}$. Thus, $K^*: L_p([0,1]) \to \ell_2$ is a compact operator by Schauder's Theorem, $T^* + K^* = (T+K)^*$ does not attain its norm by \cite[Lemma 5.1]{garcia-lirola}, and 
$$ \norma{T^*+K^*} = \norma{T+K} > \norma{T} = \norma{T^*}. $$
Thus $(L_p([0,1]), \ell_2)$ fails the $CPP$, hence it fails the $WMP$. \qed

\medskip

It is worthy noticing that an alternative proof of the complex case could be obtained from the results of \cite{holtz}.

\bigskip

\noindent{\bf Acknowledgments:} The author would like to thank Lucas Roberto de Lima for recommending references \cite{Billingsley1995, rolla, talagrand}.

\noindent V. C. C. Miranda\\
Departamento de Matemática\\
Instituto de Ciências Matemáticas e de Computação \\
Universidade de São Paulo \\
13566-590 -- São Carlos - SP -- Brazil  \\
e-mail: viniciusmiranda@icmc.usp.br

\end{document}